\newtheorem{lemma}{Lemma}
\newtheorem{definition}{Definition}
\author{Patrik Vaclav Nabelek}
\email{pnabelek@math.arizona.edu}
\author{Douglas Pickrell}
\email{pickrell@math.arizona.edu}
\title{Harmonic Maps and the Symplectic Category}
\begin{document}

\maketitle

\begin{abstract} In the context of the two dimensional sigma model, we show that classical field theory
naturally defines a functor from Segal's category of Riemann surfaces to the Guillemin-Sternberg/Weinstein
category of canonical relations in symplectic geometry, following ideas of Cattaneo, Mnev, and Reshetikhin.
This is an expository article, based on a research tutorial.
\end{abstract}

\section{Introduction}

Given a smooth map of manifolds $f:M\to N$, there is an associated map of vector bundles, the derivative $f_*:TM \to TN$. The chain rule asserts that if $f : M\to N$ and $g : N \to P$, then $(g\circ f)_*  = g_* \circ f_*$.
In the language of category theory, the derivative defines a functor from the category of manifolds to the category of vector bundles.

In Hamiltonian mechanics, the dynamics is given on the cotangent bundle of a configuration space (or more generally on a symplectic or Poisson manifold), rather than on the tangent bundle. However there is not a functor that takes a smooth map $f:M \to N$ to a map from $T^*M$ to $T^*N$.

In \cite{GuSt1} and \cite{Wein} (and more recently, and in more detail, in \cite{GuSt2}) a ``symplectic category" $\textbf{Symp}$ is defined, in which the objects are symplectic manifolds and the morphisms are Lagrangian submanifolds, which are also called canonical relations (this is not quite a true category, but we will temporarily ignore this point). Given a smooth map $f:M \to N$, Guillemin and Sternberg, and Weinstein, associate a Lagrangian submanifold of $T^*M^- \times T^*N$ to the map $f$, given in an elegant way by the conormal bundle of the graph of $f$. Their beautiful discovery is that this assignment, analogous to the derivative, defines a functor from the category of manifolds to the symplectic category.

This point of view appears to be very fruitful in field theory. At the classical and semiclassical levels, in the context of gauge theory, it has been developed by Cattaneo, Mnev, and Reshetikhin (\cite{Cat1}, \cite{Cat2}, \cite{R}). In this paper, which is expository, we consider the two dimensional nonlinear sigma model. In this case the classical fields of the model are maps from a Riemann surface to a fixed target Riemannian manifold, $N$, and the classical solutions are harmonic maps. The case when $N=U/K$, a compact symmetric space, is especially interesting, because in some senses the model is integrable, both classically and - for
some, but apparently not all, symmetric space targets - quantum mechanically. In addition, on the one hand, the two dimensional sigma model has a number of (partially elucidated) characteristics in common with four dimensional gauge theory (essentially the standard model), such as a classical conformal symmetry, which is broken at the quantum level, and asymptotic freedom, which can be interpreted to mean that the semiclassical theory should be a good approximation to the quantum theory at short distances (see \cite{Witten}, for an expository account of this); and, on the other hand, the sigma model is more elementary than gauge theory (hence presumably more tractable).

The categorical point of view was introduced in field theory by Segal (see \cite{Segal}). The relevant category for the two dimensional sigma model - at the classical level - is Segal's category of compact Riemann surfaces, where the objects are compact oriented 1-manifolds, and the morphisms are compact Riemann surfaces. Our primary goal in this paper is to show that the classical theory naturally yields a functor from Segal's category to the symplectic category. More precisely given a compact oriented 1-manifold $S$, we associate to $S$ the cotangent bundle of the configuration space $\Omega^0(S;N)$ (essentially closed strings in $N$ parameterized by $S$), and to a compact Riemann surface $\Sigma$, we associate $Harm(\Sigma; N)$, the space of harmonic maps from $\Sigma$ to $N$, which in a natural way defines a Lagrangian submanifold of $T^*\Omega^0(S;N)$ (or more precisely, the space of Cauchy data along the boundary $S$; see
Section \ref{functor} below).

\begin{lemma}\label{sewing}Consider a composition $\Sigma_2\circ \Sigma_1$ in Segal's category. Then
$$Harm(\Sigma_2; N) \circ Harm(\Sigma_1; N) = Harm(\Sigma_2\circ\Sigma_1; N)$$\end{lemma}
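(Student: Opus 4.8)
The plan is to reduce the statement to a local ``removable seam'' fact about harmonic maps, after unwinding the two notions of composition involved. First I would recall the description of $Harm(\Sigma;N)$ as a canonical relation: a harmonic map $\phi:\Sigma\to N$ is sent to the pair consisting of its boundary restriction $\phi|_{\partial\Sigma}\in\Omega^0(\partial\Sigma;N)$ together with its outward normal derivative $\partial_\nu\phi$ along $\partial\Sigma$, the latter being the momentum conjugate to the boundary value that appears in the boundary term $\int_{\partial\Sigma}\langle\partial_\nu\phi,\delta\phi\rangle$ in the first variation of the Dirichlet energy. Writing $\Sigma_1:S_0\to S_1$ and $\Sigma_2:S_1\to S_2$, so that $\Sigma_2\circ\Sigma_1$ is obtained by sewing the outgoing boundary $S_1$ of $\Sigma_1$ to the orientation-reversed incoming copy of $S_1$ in $\Sigma_2$, I would then spell out the composition of canonical relations: a point of $Harm(\Sigma_2;N)\circ Harm(\Sigma_1;N)$ is a pair of Cauchy data on $S_0$ and on $S_2$ for which there exists Cauchy data $(\varphi,p)$ on $S_1$ lying in both relations, i.e. realized by a harmonic $\phi^{(1)}$ on $\Sigma_1$ and a harmonic $\phi^{(2)}$ on $\Sigma_2$ with $\phi^{(1)}|_{S_1}=\varphi=\phi^{(2)}|_{S_1}$ and with matching normal momenta on $S_1$.

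The inclusion $\supseteq$ is the substantive direction. Given such $\phi^{(1)},\phi^{(2)}$, I would glue them to a single map $\phi$ on $\Sigma_2\circ\Sigma_1$. Equality of the boundary values makes $\phi$ continuous across the seam $S_1$, and the key point is that the sign flip built into the ``$-$'' in $T^*\Omega^0(S_1;N)^-$ encodes exactly the fact that the outward normal of $\Sigma_1$ along $S_1$ is the inward normal of $\Sigma_2$; hence equality of the momenta $p$ says precisely that the one-sided normal derivatives of $\phi$ agree, i.e. $\phi$ is $C^1$ across $S_1$. Now I invoke interior elliptic regularity for the semilinear harmonic map system: a map that is smooth and harmonic on each side of a smooth hypersurface and is $C^1$ across it solves the equation weakly (the $C^1$ matching kills the distributional jump supported on $S_1$), and then bootstraps to a smooth harmonic map in a neighborhood of $S_1$. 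Here one uses that harmonicity of a map out of a surface depends only on the conformal class, so the conformal structure induced on $\Sigma_2\circ\Sigma_1$ near the seam causes no difficulty. Thus $\phi\in Harm(\Sigma_2\circ\Sigma_1;N)$, and by construction its Cauchy data on $S_0\sqcup S_2$ are the prescribed ones.

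The reverse inclusion $\subseteq$ is then a restriction argument: given $\phi\in Harm(\Sigma_2\circ\Sigma_1;N)$, the restrictions $\phi|_{\Sigma_1}$ and $\phi|_{\Sigma_2}$ are harmonic, and they tautologically share boundary value and normal momentum along $S_1$, so the Cauchy data of $\phi$ on $S_0$ and on $S_2$ form an element of $Harm(\Sigma_2;N)\circ Harm(\Sigma_1;N)$. The only thing to check is that $\phi$ is regular enough up to $S_1$ to have well-defined Cauchy data there, which is boundary elliptic regularity for harmonic maps on the closure of each piece.

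I expect the main obstacle to be precisely this seam-regularity step: fixing the function-space completion in which $\Omega^0(S;N)$ and its cotangent bundle are defined so that ``glue two harmonic maps with matching Cauchy data'' really lands in $Harm(\Sigma_2\circ\Sigma_1;N)$, and dually so that harmonic maps have well-defined traces and conormal derivatives on $S_1$. A secondary point, which I would dispose of with a remark rather than an argument, is that $\textbf{Symp}$ is not a strict category: one should observe that the fiber product defining the composition here is clean (by unique continuation the Cauchy data map is injective, and the matching conditions cut out the intersection along $S_1$ cleanly), so that the composite is again an immersed Lagrangian and the set-theoretic identity stated in the lemma is the correct assertion.
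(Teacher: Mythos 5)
Your proposal follows essentially the same route as the paper: you unwind the composition of canonical relations in terms of the enhanced trace data (boundary value plus conormal momentum $*d\phi|_S$), obtain one inclusion by restricting a harmonic map on $\Sigma_2\circ\Sigma_1$ to the two pieces, and obtain the other by gluing harmonic maps with matching Cauchy data along the seam. The only difference is that where the paper simply asserts that matching values and normal derivatives make the glued map harmonic, you supply the standard justification (the $C^1$ match kills the distributional term, so the glued map is a weak solution, and elliptic regularity plus conformal invariance near the seam upgrades it to a smooth harmonic map), which fills in detail rather than changing the argument.
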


On the left hand side of the equation in the lemma, $\circ$ denotes composition in the infinite dimensional generalization of $\textbf{Symp},$ while on the right hand side $\circ$ denotes composition in Segal's category of Riemann surfaces.

At one level this lemma is transparent: it is in particular asserting that a harmonic map on the composition
$\Sigma_2\circ\Sigma_1$ is the same thing as a pair of harmonic maps on $\Sigma_1$ and $\Sigma_2$ which agree
along the boundary in an appropriate sense (to first order); this is very reasonable, given the local nature of the harmonic map equation. Segal emphasized the importance of this sewing property for classical solutions in the construction of the corresponding quantum conformal field theory, in the one case where this is understood, i.e. when $N$ is a flat torus (see chapter 10 of \cite{Segal}). Our modest goal is simply to make explicit this connection between Segal's approach to conformal field theory, in a classical setting, and the symplectic category.

As we mentioned previously, this paper is expository - there is no claim to originality; it is based on a research tutorial. In proving the functor lemma we will, for the most part, follow the program in \cite{Cat1}.

\section{The Symplectic ``Category"}

In this section, following \cite{GuSt2}, we recall the definition of the symplectic category $\textbf{Symp}$.
Before we begin to define the symplectic category in detail, we need to define an involution and a product on the set of symplectic manifolds.
If $(M,\omega)$ is a symplectic manifold we will write $M$ for $(M,\omega)$ and $M^-$ for $(M,-\omega)$.
Note that $M\to M^-$ is clearly an involution.
If $(M_1,\omega_1)$ and $(M_2,\omega_2)$ are symplectic manifolds, then we define the product $M_1 \times M_2$ to be the symplectic manifold $(M_1 \times M_2, \pi_1^*\omega_1+\pi_2^*\omega_2),$ where the $\pi_i:M_1 \times M_2 \to M_i$ are the projections.

Given two symplectic manifolds $M_1$ and $M_2$, a canonical relation $M_1 \stackrel{L}{\rightarrow} M_2$ is a Lagrangian submanifold $L\subset M_1^- \times M_2.$
The terminology canonical relation is used, since the graph of a canonical transformation of phase space is a canonical relation, and of course subsets of $M_1^- \times M_2$ are relations.
A point in $\textbf{Symp}$ is identified with a Lagrangian submanifold $L$ of a symplectic manifold $M.$
This notion of a point is the most natural notion in the sense that a zero dimensional manifold is trivially isomorphic to $(\mathbb{R}^0,0),$ and thus a canonical relation $\mathbb{R}^0 \to M$ is a Lagrangian submanifold of $M \equiv (\mathbb{R}^0\times M, -0+\omega).$

Given a symplectic manifold $M$ we define the identity canonical relation $M \xrightarrow{\Delta_M} M$ to be $\{(p,p) \in M^- \times M: p\in M\}.$
It is clear that $\Delta_M \equiv M$ as a smooth manifold and that the symplectic form $- \pi_1^*\omega+\pi_2^*\omega $ vanishes on $\Delta_M,$; hence $\Delta_M$ is Lagrangian in $M^-\times M$ since it has the correct dimension.
Let $M_1 \xrightarrow{L_1} M_2$ and $M_2 \xrightarrow{L_2} M_3$ be two canonical relations.
Consider the subset $$L_2\star L_1 \subset L_1\times L_2 \subset M_1^-\times M_2\times M_2^-\times M_3$$ defined as
$$L_2 \star L_1 = \{(x_1,y_1,y_2,z_2) \in L_1 \times L_2 : y_1 = y_2 \}.$$
Let $\pi_{13}$ denote the projection of $M_1^-\times M_2\times M_2^-\times M_3$ onto $M_1^- \times M_3.$
\begin{definition}[Composition of Canonical Relations]
We say $L_1$ and $L_2$ are composable canonical relations when $\pi_{13}(L_2 \star L_1)$ is a Lagrangian submanifold of $M_1^- \times M_3.$
When $L_1$ and $L_2$ are composable we define the composition $L_2 \circ L_1 = \pi_{13}(L_2 \star L_1).$
\end{definition}

Let $\phi :M_1\to M_2$ be a local symplectomorphism of symplectic manifolds, i.e. $\omega_1=\phi^* \omega_2$ and $\phi$ is a regular smooth map.
Consider $L_\phi \subset M_1^-\times M_2$ given by $\{(p_1,p_2):p_2=\phi(p_1)\}.$
Let $(p_1,p_2)\in L_\phi,$ and $c$ a smooth parameterized curve in $L_\phi$ with $c(0)=(p_1,p_2).$
By the definition of $L_\phi,$ $c = (\tilde{c}, \phi\circ\tilde{c})$ where $\tilde{c}$ is a smooth parameterized curve in $M_1.$
When we consider vectors in $T_{(p_1,p_2)}L_\phi $ as equivalence classes of curves, we thus get that $v\in T_{(p_1,p_2)}L_\phi$ is of the form $(\tilde{v},\phi_*\tilde{v})$ where $\tilde{v} \in T_{p_1}M_1.$
Therefore,
\begin{align*}(-\pi_1^*\omega_1+\pi_2^*\omega_2)(v,w) & = -\omega_1(\tilde{v},\tilde{w}) + \omega_2(\phi_*\tilde{v},\phi_*\tilde{w}) \\ & =-\omega_1(\tilde{v},\tilde{w})+\phi^*\omega_2(\tilde{v},\tilde{w})=0.\end{align*}
Therefore $L_\phi$ is a Lagrangian submanifold of $M_1^-\times M_2.$ Therefore, to each symplectomorphism $\phi:M_1\to M_2$, we get a canonical relation $L_\phi\subset M_1^-\times M_2$, which we view as a morphism $L_{\phi}:M_1\to M_2$ in $\textbf{Symp}.$

We will now show that canonical relations of the form above, that come from local symplectomorphisms, are composable.
Since the Hamiltonian flow on $T^*M$ is a 1-parameter group of symplectomorphisms, this will show that the Hamiltonian flow corresponds to a functor from $\mathbb{R}$ (or at least $(-\epsilon,\epsilon)$) to $\textbf{Symp}.$
Let $\phi_1:M_1\to M_2$ and $\phi_2:M_2\to M_3$ be local symplectomorphisms. By definition
$$L_{\phi_2} \star L_{\phi_1} = \{(p_1, \phi_1(p_1), p_2,\phi_2(p_2)) : \phi_1(p_1) = p_2,$$
If $\phi_1(p_1) = p_2$, then $\phi_2\circ\phi_1(p_1) =\phi(p_2).$
Therefore $$L_{\phi_2} \circ L_{\phi_1} = \pi_{13}(L_{\phi_2} \star L_{\phi_1})=\{(p_1,\phi_2\circ\phi_1(p_1)):p_1\in M_1\}=L_{\phi_2\circ\phi_1}.$$
But we already know that $L_{\phi_2\circ\phi_1}$ is a Lagrangian submanifold, so $L_{\phi_2}$ and $L_{\phi_1}$ are composable and $L_{\phi_2}\circ L_{\phi_1}=L_{\phi_2\circ\phi_1}.$

This shows that the category $\textbf{Symp}$ is a natural extension of the category consisting of symplectic manifolds
and local symplectomorphisms. We now want to show that there is a functor from the category of manifolds to
 $\textbf{Symp}$, as we alluded to in the introduction.

To understand the definition of the functor, we first need to recall the definition of the conormal bundle of a submanifold $U\subset M$ ($M$ is now simply a manifold, or a configuration space).
The conormal bundle of $U$ is a special Lagrangian submanifold of $T^*M$ with the canonical symplectic form.
The conormal bundle of $U$ is defined as
$$N(U) =  \{\xi|_x \in T^*M : x\in U, \xi|_x(v) = 0 \text{ for all } v\in T_x^*U\}.$$
Note that if $M$ has a Riemannian metric then $N(U)$ is isomorphic to the normal bundle by the isomorphism of the tangent bundle to the cotangent bundle given by the Riemannian metric induced from the Riemannian metric on $M.$
Let $\alpha$ be the action one form on $T^*M,$ i.e. for $v\in T(T^*M)|_{\xi|_x}$ $\alpha(v)=\xi(\pi_*v).$
Then for $v\in T(N(U))$ $\alpha(v) = 0$ since $\pi_*v \in TM,$ so the conormal bundle is in fact a Lagrangian submanifold of $T^*M.$

Let $M_1$ and $M_2$ be manifolds, and $f:M_1 \to M_2$ be a smooth map.
Then $T^*M_1$ and $T^*M_2$ are symplectic manifolds with canonical symplectic forms $\omega_1, \omega_2$ and action one forms $\alpha_1, \alpha_2.$
The map $f$ induces a canonical relation on the cotangent bundles $L_f \subset T^*M_1^- \times T^*M_2$ defined as
\begin{equation}\label{defn3}L_f = \{(f^*\xi|_x,\xi|_{f(x)})\in T^*M_1\times T^*M_2\}.\end{equation}

\begin{lemma} $L_f$ is a canonical relation.\end{lemma}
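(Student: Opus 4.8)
The plan is to realize $L_f$, up to one explicit symplectomorphism, as the conormal bundle of the graph of $f$, and then quote the fact just established above that conormal bundles are Lagrangian. First I would use the canonical identification $T^*(M_1\times M_2)\cong T^*M_1\times T^*M_2$, under which the action one form becomes $\pi_1^*\alpha_1+\pi_2^*\alpha_2$ and hence the canonical symplectic form becomes $\pi_1^*\omega_1+\pi_2^*\omega_2$. The graph $\Gamma_f=\{(x,f(x)):x\in M_1\}$ is the image of the embedding $x\mapsto(x,f(x))$, so it is an embedded submanifold of $M_1\times M_2$, and therefore $N(\Gamma_f)$ is a Lagrangian submanifold of $T^*M_1\times T^*M_2$ for $\pi_1^*\omega_1+\pi_2^*\omega_2$. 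Computing the annihilator of $T_{(x,f(x))}\Gamma_f=\{(v,f_*v):v\in T_xM_1\}$ gives
$$N(\Gamma_f)=\{(-f^*\xi|_x,\ \xi|_{f(x)}):x\in M_1,\ \xi\in T^*_{f(x)}M_2\},$$
which differs from $L_f$ of \eqref{defn3} only by the sign in the first factor.

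To repair that sign I would introduce the fiberwise negation $\sigma:T^*M_1\to T^*M_1$, $\xi|_x\mapsto-\xi|_x$. Since $\pi_1\circ\sigma=\pi_1$ one gets $\sigma^*\alpha_1=-\alpha_1$, hence $\sigma^*\omega_1=-\omega_1$, so $\sigma$ is a symplectomorphism of $T^*M_1$ onto $T^*M_1^-$. Consequently $\sigma\times\mathrm{id}$ is a symplectomorphism from $(T^*M_1\times T^*M_2,\ \pi_1^*\omega_1+\pi_2^*\omega_2)$ onto $(T^*M_1^-\times T^*M_2,\ -\pi_1^*\omega_1+\pi_2^*\omega_2)$, and by inspection it carries $N(\Gamma_f)$ onto precisely $L_f$. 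Since symplectomorphisms preserve the Lagrangian condition, $L_f$ is a Lagrangian submanifold of $T^*M_1^-\times T^*M_2$; that is, $L_f$ is a canonical relation $T^*M_1\to T^*M_2$, as claimed.

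I do not anticipate a real obstacle here; the only point needing care is the bookkeeping of signs, i.e. checking that fiberwise negation in the first slot is exactly the diffeomorphism intertwining the canonical form on $T^*(M_1\times M_2)$ with the form $-\pi_1^*\omega_1+\pi_2^*\omega_2$ appearing in the definition of the symplectic category. As a cross-check, and to avoid the conormal-bundle language entirely if desired, one may argue directly: parameterizing $L_f$ by pairs $(x,\xi)$ with $\xi\in T^*_{f(x)}M_2$, any tangent vector to $L_f$ has the form $V=(V_1,V_2)$ with $\pi_{1*}V_1=\dot x$ and $\pi_{2*}V_2=f_*\dot x$ for some $\dot x\in T_xM_1$, so $\alpha_1(V_1)=f^*\xi|_x(\dot x)=\xi|_{f(x)}(f_*\dot x)=\alpha_2(V_2)$; hence $-\pi_1^*\alpha_1+\pi_2^*\alpha_2$ restricts to zero on $L_f$, so $-\pi_1^*\omega_1+\pi_2^*\omega_2$ does too, and together with the dimension count $\dim L_f=\dim M_1+\dim M_2=\tfrac12\dim(T^*M_1\times T^*M_2)$ and the fact that $L_f$ is an embedded submanifold (being a vector subbundle of $(T^*M_1\times T^*M_2)|_{\Gamma_f}$) this again gives that $L_f$ is Lagrangian.
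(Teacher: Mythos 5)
Your proposal is correct and follows essentially the same route as the paper: both realize the set $\{(-f^*\xi|_x,\xi|_{f(x)})\}$ inside the conormal bundle of the graph of $f$ (hence Lagrangian for $\pi_1^*\omega_1+\pi_2^*\omega_2$), and then apply fiberwise negation in the first factor — your $\sigma\times\mathrm{id}$ is exactly the paper's map $i_1$ — as a symplectomorphism onto $T^*M_1^-\times T^*M_2$ carrying that set to $L_f$. The direct check via the action one form that you add at the end is a harmless (and correct) bonus, not a different argument.
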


\begin{proof}
First note that $(-f^*\xi|_x,\xi_{f(x)})$ is an element of $N(\text{graph}(f)).$ This is because a vector $v\in T(\text{graph}(f))|_{(x,f(x))}\subset TM_1|_x\times TM_2|_{f(x)}$ is of the form $(\tilde v|_x,f_*\tilde v|_{f(x)})$  where $\tilde v|_x \in TM_1|_x,$ and
$$(-f^*\xi|_x,\xi|_{f(x)})((\tilde v|_x,f_*\tilde v|_{f(x)})) = -f^*\xi(\tilde v|_x) + \xi(f_* \tilde v|_{f(x)}) = (-f^*+f^*)(\tilde v|_x) = 0.$$
Therefore $L_f^-=\{(-f^*\xi|_x,\xi|_{f(x)})\in T^*M_1\times T^*M_2\}$ is an $n+m$ dimensional submanifold of the $2n+2m$ space $T^*M_1\times T^*M_2.$
$L_f^-$ is also a subspace of the Lagrangian manifold $N(\text{graph}(f)) \subset T^*M_1\times T^*M_2.$
The previous two statements imply that $L_f^-$ a Lagrangian  submanifold of $T^*M_1\times T^*M_2.$
Consider the operation
$$i_1 : T^*M_1\times T^*M_2 \to T^*M_1\times T^*M_2 : (\xi_1,\xi_2) \to (-\xi_1,\xi_2),$$
then $i_1(L_f^-) = L_f.$
Let $$v = (v_1|_{f^*\xi|_x},v_2|_{\xi|_{f(x)}}) \in T(T^*M_1\times T^*M_2)|_{(f^*\xi|_x,\xi|_{f(x)})},$$
then
$$i_1^*(\alpha)(v) = i_1(f^*\xi|_x, \xi_{f(x)})(d\pi i_{1*}(v)) = -\alpha_1(v_1)+\alpha_2(v_2)$$
where $\alpha = \pi_1^*\alpha_1+\pi_2^*\alpha_2.$
Therefore $i_1^* (-\pi_1^*\omega_1+\pi_2^*\omega_2)= \pi_1^*\omega_1 + \pi_2^*\omega_2,$
and thus $i_1$ is a symplectomorphism $T^*M_1\times T^*M_2 \to T^*M_1^- \times T^*M_2.$
$L_f$ is thus a Lagrangian submanifold of $T^*M_1^- \times T^*M_2,$ and so we have verified that $L_f$ is a canonical relation.\end{proof}

Let $f:M_1\to M_2$ and $g:M_2\to M_3$ be smooth maps.
Then
$$L_g \star L_f = \{(f^*\xi|_{x_1},\xi|_{f(x_1)},g^*\eta|_{x_2}, \eta|_{g(x_2)}) : \xi|_{f(x_1)} = g^*\eta|_{x_2}\}.$$
$\xi|_{f(x_1)} = g^*\eta|_{x_2}$ implies
$$f^*\xi|_{x_1} = f^*(\xi|_{f(x_1)}) = f^*(g^*\eta|_{x_2}) = (g\circ f)^*\eta|_{x_1}$$
and $\eta|_{g(x_2)} = \eta|_{g \circ f (x_1)}.$
Therefore,
$$\pi_{13} (L_g \star L_f) = \{ ((g\circ f)^*\eta|_{x_1}, \eta|_{g \circ f (x_1)}) \in T^*M_1^- \times T^*M_3 \}  = L_{g\circ f}.$$
But we already know $L_{g \circ f}$ is a Lagrangian submanifold of $T^*M_1^- \times T^*M_3,$ so $L_f$ and $L_g$ are composable with $L_g \circ L_f = L_{g \circ f}.$

Let $\textbf{Man}$ denote the category of manifolds.
We have now proven the following lemma which was mentioned in the introduction.

\begin{lemma} There is a functor $\textbf{Man} \to \textbf{Symp},$ which (on objects) maps a manifold (or configuration space) $M$ to its cotangent bundle $T^*M$ (with the canonical symplectic structure), and (on morphisms) maps a smooth map of manifolds (or configuration spaces) $f:M_1\to M_2$ to the canonical relation $L_f \subset T^*M_1^- \times T^*M_2$ defined by (\ref{defn3}).
\end{lemma}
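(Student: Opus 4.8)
The plan is to assemble the pieces already in hand into a verification of the two functor axioms, after first noting that the assignment is well defined on objects and on morphisms. On objects there is nothing to check: every smooth manifold $M$ carries a canonical symplectic cotangent bundle $T^*M$, and this is the image object. On morphisms, the assignment $f \mapsto L_f$ does land in the morphisms of $\textbf{Symp}$ precisely because of the preceding Lemma, which shows that $L_f$ is a canonical relation, i.e. a Lagrangian submanifold of $T^*M_1^- \times T^*M_2$. So the only real work left is to check compatibility with composition and with identities.

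For composition, I would simply invoke the computation carried out above: given $f:M_1\to M_2$ and $g:M_2\to M_3$, forming $L_g \star L_f$, using the matching condition $\xi|_{f(x_1)} = g^*\eta|_{x_2}$ to rewrite $f^*\xi|_{x_1} = (g\circ f)^*\eta|_{x_1}$ and $\eta|_{g(x_2)} = \eta|_{(g\circ f)(x_1)}$, and then projecting by $\pi_{13}$, one obtains $\pi_{13}(L_g \star L_f) = L_{g\circ f}$. Since $L_{g\circ f}$ is already known to be a Lagrangian submanifold of $T^*M_1^- \times T^*M_3$, the pair $(L_f,L_g)$ is composable and $L_g\circ L_f = L_{g\circ f}$; this is exactly functoriality on composites. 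For identities, I would substitute $f = \mathrm{id}_M$ into the defining formula $L_f = \{(f^*\xi|_x,\xi|_{f(x)})\}$: since $\mathrm{id}_M^*\xi|_x = \xi|_x$ and $\mathrm{id}_M(x)=x$, this collapses to $L_{\mathrm{id}_M} = \{(\xi|_x,\xi|_x) : \xi|_x\in T^*M\} = \Delta_{T^*M}$, which is by definition the identity morphism $T^*M \to T^*M$ in $\textbf{Symp}$. That is a one-line substitution and I expect no difficulty there.

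The one point that deserves a remark rather than a calculation is that $\textbf{Symp}$ is not literally a category: a general pair of canonical relations need not be composable, since $\pi_{13}$ applied to the fibre product may fail to have smooth Lagrangian image. The substance of the composition step is that this pathology never arises for relations of graph type $L_f$ — the composite of two such is again of this type — so the image of our assignment is a genuine (sub)category and the assignment is an honest functor onto it. I anticipate that this bookkeeping about the ambient ``category'' is the only subtle issue; the two axioms themselves follow immediately from the work already done, the identity case being trivial and the composition case being the content of the paragraph preceding the statement.
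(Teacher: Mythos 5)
Your proposal is correct and follows essentially the same route as the paper, which proves the lemma precisely by citing the preceding result that each $L_f$ is a canonical relation together with the computation $\pi_{13}(L_g\star L_f)=L_{g\circ f}$ showing composability and $L_g\circ L_f=L_{g\circ f}$. Your explicit check that $L_{\mathrm{id}_M}=\Delta_{T^*M}$ and your remark about $\textbf{Symp}$ not being a true category are welcome additions that the paper leaves implicit, but they do not change the argument.
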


In this section we have been considering finite dimensional manifolds. At least in a formal way, the results carry over
to infinite dimensional manifolds, as we will consider in section \ref{functor}.

\section{Harmonic Maps in General}

In this section we will introduce harmonic maps largely following \cite{Hel} and \cite{EeLe} but with slightly different notation.

A weakly harmonic map from a Riemannian manifold $(M^m,g)$ to a Riemannian manifold $(N^n,h)$ is an element $\phi$ of the ($L^2$) Sobolev space $W^1(M;N)$ that is a critical point of the generalized Dirichlet energy
$$\mathcal{A}(\phi) = \int_M \left<d\phi\wedge * d\phi \right>.$$
where we interpret $d\phi$ as a one form on $M$ with values in the pull back bundle $\phi^*TN \to M,$  $<\cdot,\cdot>$ is the pull back of $h$ to $\phi^*TN,$ and $*$ denotes the Hodge dual operator for forms on $(M,g).$
In coordinates on $M$
\begin{align*}
\left<d\phi\wedge * d\phi \right> & = \left<(\partial_\alpha\phi \otimes dx^\alpha)\wedge * (\partial_\beta \phi \otimes dx^\beta)\right> \\
& = \left< (\partial_\alpha\phi \otimes \partial_\beta\phi) \otimes dx^\alpha*dx^\beta\right> \\
& = \left< \partial_\alpha\phi, \partial_\beta\phi \right> g^{\alpha\beta}\text{dvol}_g.
\end{align*}
We use the usual convention by which we sum over repeated indices, and the shorthand $\partial_\alpha = \frac{\partial}{\partial x^\alpha}.$
$g^{\alpha\beta}$ is the inverse of the matrix $g_{\alpha\beta}= g(\partial_\alpha,\partial_\beta),$ and $\text{dvol}_g$ is the volume element on $M$ compatible with the metric $g.$
Moreover, if $M$ and $N$ are both open subsets of Euclidean spaces, then $\left<d\phi\wedge * d\phi \right> = \sum_\alpha |\partial_\alpha\phi|^2 dV.$
We will call a weakly harmonic map that is also smooth a (strongly) harmonic map.
It should be noted that harmonic maps are a generalization of both geodesics (when the domain is a line), and harmonic function (when the target is the real line).

A connection $\nabla$ on a vector bundle $V\to M$ is a linear map $$\Omega^0(V) \to \Omega^0(V) \otimes \Omega^1(M),$$
satisfying $\nabla(fs)=f\nabla(s)+s\otimes df$ for a function $f$ and a section $s$
i.e. $\nabla$ takes sections of $E$ to one forms with values in $E.$
We can also extend $\nabla$ to a complex of maps on differential forms with values in $E$
$$\Omega^0(\phi^*TN) \xrightarrow{d^\nabla} \Omega^0(\phi^*TN) \otimes \Omega^1(M) \xrightarrow{d^\nabla} \Omega^0(\phi^*TN) \otimes \Omega^2(M) \xrightarrow{d^\nabla} \dots$$
in a way completely analogously to the way the the differential $d$ extends to the exterior algebra on differential forms, but with the essential difference that
$(d^\nabla)^2 = R,$ the curvature tensor for $\nabla.$
This reduces to the usual exterior derivative when $E$ is the trivial flat real line bundle $\mathbb{R}\times M \to M.$

Now suppose that $\phi:M\to N$, and let $\nabla$ denote the pull back of the Levi-Civita connection on $(N,h)$ to $\phi^*TN$ (a vector bundle on $M$).
In considering variations of a map $\phi$, we will often introduce a family of maps $\phi_t$ which can be considered a map on $I\times M,$ and we will then let $\tilde\nabla$ be the pull back of the Levi-Civita connection on $(N,h)$ to $\phi_t^*TN\to I\times M.$

\begin{lemma}
Harmonic maps $\phi:M\to N$ satisfy the harmonic map equation $d^\nabla *d\phi = 0.$
\end{lemma}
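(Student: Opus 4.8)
The plan is to obtain the harmonic map equation as the Euler--Lagrange equation of $\mathcal{A}$ by a first-variation computation. Let $\phi:M\to N$ be harmonic, hence smooth and a critical point of $\mathcal{A}$, and fix a smooth section $V\in\Omega^0(\phi^*TN)$ with support in the interior of $M$. Choose a smooth one-parameter family $\phi_t:M\to N$ with $\phi_0=\phi$ and $\partial_t\phi_t|_{t=0}=V$; concretely one may take $\phi_t(x)=\exp^N_{\phi(x)}(tV(x))$. Viewing $(t,x)\mapsto\phi_t(x)$ as a map on $I\times M$ and letting $\tilde\nabla$ be the pull-back of the Levi-Civita connection to $\phi_t^*TN\to I\times M$ (as set up in the paragraph preceding the lemma), the first step is to differentiate under the integral sign. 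Since the Hodge star on $M$ does not depend on $t$ and $\tilde\nabla$ is metric compatible with $\langle\cdot,\cdot\rangle$, one finds
\[
\frac{d}{dt}\Big|_{t=0}\mathcal{A}(\phi_t)=\frac{d}{dt}\Big|_{t=0}\int_M\langle d\phi_t\wedge *d\phi_t\rangle=2\int_M\langle \tilde\nabla_{\partial_t}d\phi\wedge *d\phi\rangle,
\]
where $\tilde\nabla_{\partial_t}d\phi$ denotes the covariant $t$-derivative of (the $M$-directional part of) $d\phi_t$ at $t=0$; only that part contributes, since $*d\phi$ is a form on $M$.

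The second step is the identity $\tilde\nabla_{\partial_t}d\phi=d^\nabla V$ at $t=0$. This rests on the torsion-freeness of the Levi-Civita connection: the $\phi_t^*TN$-valued one-form $d\phi_t$ on $I\times M$ satisfies $d^{\tilde\nabla}d\phi_t=0$ (in coordinates $\tilde\nabla_\mu\partial_\nu\phi_t$ is symmetric in $\mu,\nu$ because the Christoffel symbols of $h$ are symmetric and $[\partial_\mu,\partial_\nu]=0$), and contracting $d^{\tilde\nabla}d\phi_t=0$ with $\partial_t$ and a coordinate vector field on $M$ gives $\tilde\nabla_{\partial_t}(\partial_\alpha\phi_t)=\tilde\nabla_{\partial_\alpha}(\partial_t\phi_t)$, whose right-hand side at $t=0$ is $(d^\nabla V)(\partial_\alpha)$. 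Substituting,
\[
\frac{d}{dt}\Big|_{t=0}\mathcal{A}(\phi_t)=2\int_M\langle d^\nabla V\wedge *d\phi\rangle.
\]

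The third step is integration by parts. Metric compatibility of $\nabla$ yields the Leibniz rule $d\langle V\wedge *d\phi\rangle=\langle d^\nabla V\wedge *d\phi\rangle+\langle V\wedge d^\nabla *d\phi\rangle$ as $m$-forms on $M$ (with no sign, since $V$ has form-degree zero). Integrating over $M$ and applying Stokes' theorem, the total-derivative term contributes $\int_{\partial M}\langle V\wedge *d\phi\rangle$, which vanishes because $V$ is supported in the interior, so
\[
\frac{d}{dt}\Big|_{t=0}\mathcal{A}(\phi_t)=-2\int_M\langle V\wedge d^\nabla *d\phi\rangle.
\]
Since $\phi$ is a critical point, the left-hand side vanishes, and since $V$ ranges over all smooth sections of $\phi^*TN$ compactly supported in the interior of $M$, the fundamental lemma of the calculus of variations forces $d^\nabla *d\phi=0$; smoothness of $\phi$ gives this everywhere on $M$.

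I expect the only genuine care to lie in the bookkeeping of the second and third steps: formulating the torsion-free symmetry $\tilde\nabla_{\partial_t}d\phi=d^\nabla V$ invariantly rather than merely in local coordinates, and pinning down the signs in the Leibniz rule for $d$ acting on the $\langle\cdot\wedge\cdot\rangle$-pairing of $\phi^*TN$-valued forms of complementary degree — both of which hinge on the connection being metric and torsion-free. Everything else (differentiating under the integral, the explicit variation through $\exp^N$, Stokes' theorem, and the concluding appeal to the fundamental lemma) is routine.
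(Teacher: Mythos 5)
Your proposal is correct and follows essentially the same route as the paper: a first-variation computation, the torsion-free exchange $\tilde\nabla_{\partial_t}d\phi_t = d^\nabla(\partial_t\phi_t)$, integration by parts against $*d\phi$ with the boundary term killed by the support condition, and the fundamental lemma of the calculus of variations. The only difference is cosmetic: the paper carries out the integration-by-parts identity in explicit local coordinates (its equation for $d\langle\delta\phi\wedge *d\phi\rangle$), while you phrase it invariantly via the Leibniz rule for $d^\nabla$ on the $\langle\cdot\wedge\cdot\rangle$-pairing.
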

\begin{proof}
Let $\delta\phi$ be a vector field on $\phi(M)$ that vanishes on $\partial M,$ and let $\phi_t$ be a deformation of $\phi$ with $\frac{d\phi}{dt}|_{t=0} = \delta\phi.$ Then
\begin{align*}
\delta\mathcal{A}(\delta\phi) & = \int_M \frac{1}{2} \left.\frac{d}{dt} \left<d\phi_t \wedge * d\phi_t \right> \right|_{t=0} \\
& = \int_M \frac{1}{2} \left.\frac{d}{dt} g^{\alpha\beta}\left<\partial_\alpha \phi_t, \partial_\beta \phi_t \right> \right|_{t=0} \text{dvol}_g \\
& = \int_M g^{\alpha\beta} \left< \tilde\nabla_{\frac{d\phi}{dt}}\partial_\alpha \phi_t|_{t=0}, \partial_\beta \phi \right> \text{dvol}_g \\
& = \int_M g^{\alpha\beta} \left< \nabla_{\partial_\alpha \phi} \delta\phi, \partial_\beta \phi \right> \text{dvol}_g.
\end{align*}
$\text{dvol}_g$ is the volume form corresponding to the metric. In coordinate $\text{dvol}_g = \sqrt{\det g} dx^1 \wedge\dots\wedge dx^m$ where $\det g$ is the determinant of $g_{\alpha\beta}.$
Now note that
\begin{align*}
d\left< \delta\phi\wedge *d\phi \right> & = d(\left< \delta\phi, \partial_\beta\phi \right>*dx^\beta) \\
& = \partial_\alpha \left(g^{\alpha\beta} \left< \delta\phi, \partial_\beta \phi \right> \sqrt{|\det g|}\right) dx^1\wedge\dots\wedge dx^m \\
& = g^{\alpha\beta} \left< \nabla_{\partial_\alpha \phi} \delta\phi, \partial_\beta \phi \right> \text{dvol}_g
+ g^{\alpha\beta}\left<\delta\phi, \nabla_{\partial_\alpha \phi}\partial_\beta \phi \right> \text{dvol}_g \dots \\
& \quad + g^{\alpha\beta} \partial_{\alpha} \left(g^{\alpha\beta} \sqrt{|\det g|}\right) \left<\delta\phi,\partial_\beta\phi \right> dx^1\wedge\dots\wedge dx^m,
\end{align*}
and
\begin{align*}
\left< \delta\phi \wedge d^\nabla *d\phi \right>  & = \left< \delta\phi \wedge d^\nabla (\partial_\beta\phi * dx^\beta)\right> \\
& = \left< \delta\phi, \nabla_{\partial_\alpha} \left(g^{\alpha\beta}\partial_\beta\phi \sqrt{|\det g|}\right)\right>dx^1 \wedge \dots \wedge dx^m \\
& = g^{\alpha\beta}\left<\delta\phi, \nabla_{\partial_\alpha \phi}\partial_\beta \phi \right> \text{dvol}_g \dots \\
& \quad + g^{\alpha\beta} \partial_{\alpha} \left(g^{\alpha\beta} \sqrt{|\det g|}\right) \left<\delta\phi,\partial_\beta\phi \right> dx^1\wedge\dots\wedge dx^m.
\end{align*}
Therefore, by using Stokes' theorem we get the integration by parts
\begin{equation}\label{intparts}\int_M g^{\alpha\beta} \left< \nabla_{\partial_\alpha \phi} \delta\phi, \partial_\beta \phi \right> \text{dvol}_g = \int_{\partial M} \left< \delta\phi\wedge *d\phi \right> - \int_M \left< \delta\phi \wedge d^\nabla *d\phi \right>.\end{equation}
Now since $\delta\phi$ vanishes on the boundary, we get
$$\delta\mathcal{A}(\delta\phi) =  - \int_M \left< \delta\phi \wedge d^\nabla *d\phi \right>.$$
$\phi$ is a harmonic map if and only if
$$\delta\mathcal{A}(\delta\phi) =  0$$
for all $\delta\phi,$ and $\phi$ is smooth.
But then $d^\nabla *d\phi = 0$ follows from our computation of $\delta\mathcal{A}.$
\end{proof}

We also have the following linearization of the harmonic map equation, which generalizes the equation for Jacobi fields along geodesics.
\begin{lemma} Suppose that $\phi$ is a harmonic map and $\phi_t$ is a deformation of $\phi$ along the space of harmonic maps.
Then $\delta\phi = \frac{d}{dt}|_{t=0}$ satisfies the linear equation
$$d^\nabla * d^\nabla \delta\phi - g^{\alpha\beta} R(\partial_\alpha \phi, \delta\phi) \partial_\beta\phi \textnormal{ dvol}_g=0$$
where $R$ is the curvature tensor for $\nabla.$
\end{lemma}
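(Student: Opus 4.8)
The plan is to differentiate the harmonic map equation with respect to the deformation parameter $t$ and read off the linear equation satisfied by $\delta\phi$ at $t=0$. Since $\phi_t$ is a deformation through harmonic maps, for every $t$ we have $d^\nabla * d\phi_t = 0$, where $d^\nabla$ uses the pullback $\nabla$ of the Levi--Civita connection to $\phi_t^*TN\to M$. By the coordinate computation in the proof of the previous lemma, this amounts to
\[ \nabla_{\partial_\alpha}\!\left( g^{\alpha\beta}\sqrt{|\det g|}\,\partial_\beta\phi_t \right) = 0, \]
the left side being the coefficient of $dx^1\wedge\dots\wedge dx^m$ in the $\phi_t^*TN$-valued $m$-form $d^\nabla*d\phi_t$. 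I would regard the family $\{\phi_t\}$ as a single map on $I\times M$, take $\tilde\nabla$ to be the pullback of the Levi--Civita connection to $\phi_t^*TN\to I\times M$ as in the excerpt, apply $\tilde\nabla_{\partial_t}$ to the displayed equation, and evaluate at $t=0$; along $\{0\}\times M$ the connection $\tilde\nabla$ restricts to $\nabla$, so $\tilde\nabla_{\partial_\alpha}$ restricts to $\nabla_{\partial_\alpha}$, and $\partial_t\phi_t|_{t=0}=\delta\phi$.

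First I would commute $\tilde\nabla_{\partial_t}$ past the outer $\tilde\nabla_{\partial_\alpha}$. Since $\partial_t$ and $\partial_\alpha$ are coordinate fields on $I\times M$ they commute, so $\tilde\nabla_{\partial_t}\tilde\nabla_{\partial_\alpha}s=\tilde\nabla_{\partial_\alpha}\tilde\nabla_{\partial_t}s+\tilde R(\partial_t,\partial_\alpha)s$ with no $\nabla_{[\cdot,\cdot]}$ term. Because $\tilde\nabla$ is a pullback of the Levi--Civita connection, its curvature $\tilde R$ is the pullback of the curvature $R$ of $N$, so $\tilde R(\partial_t,\partial_\alpha)=R((\phi_t)_*\partial_t,(\phi_t)_*\partial_\alpha)$, which at $t=0$ equals $R(\delta\phi,\partial_\alpha\phi)$. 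The coefficients $g^{\alpha\beta}$ and $\sqrt{|\det g|}$ depend on $M$ only, so they pass through $\tilde\nabla_{\partial_t}$ untouched.

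Next I would identify $\tilde\nabla_{\partial_t}(\partial_\beta\phi_t)\big|_{t=0}$. Writing $\partial_\beta\phi_t=(\phi_t)_*\partial_\beta$ and using that the pulled-back Levi--Civita connection is torsion free together with $[\partial_t,\partial_\beta]=0$, one gets $\tilde\nabla_{\partial_t}(\phi_t)_*\partial_\beta=\tilde\nabla_{\partial_\beta}(\phi_t)_*\partial_t$; at $t=0$, since $\partial_t\phi_t|_{t=0}=\delta\phi$, the right side is $\nabla_{\partial_\beta}\delta\phi$. Combining the two steps, differentiating the harmonic map equation and evaluating at $t=0$ gives
\[ \nabla_{\partial_\alpha}\!\left( g^{\alpha\beta}\sqrt{|\det g|}\,\nabla_{\partial_\beta}\delta\phi \right) + g^{\alpha\beta}\,R(\delta\phi,\partial_\alpha\phi)\,\partial_\beta\phi\,\sqrt{|\det g|} = 0. \]
The first term is the coefficient of $dx^1\wedge\dots\wedge dx^m$ in $d^\nabla*d^\nabla\delta\phi$ (the same local formula as before, applied to the one-form $d^\nabla\delta\phi$ in place of $d\phi$), and rewriting $R(\delta\phi,\partial_\alpha\phi)=-R(\partial_\alpha\phi,\delta\phi)$ turns the second term into $-g^{\alpha\beta}R(\partial_\alpha\phi,\delta\phi)\partial_\beta\phi\,\text{dvol}_g$; this is the asserted equation.

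The computation is largely bookkeeping; I expect the one point requiring care to be keeping the two connections distinct, $\nabla$ on $\phi^*TN\to M$ and $\tilde\nabla$ on $\phi_t^*TN\to I\times M$, and cleanly justifying the two identities invoked above: the curvature commutation rule for $\tilde\nabla$, and the symmetry $\tilde\nabla_{\partial_t}(\phi_t)_*\partial_\beta=\tilde\nabla_{\partial_\beta}(\phi_t)_*\partial_t$ of mixed covariant derivatives, which is precisely where torsion-freeness enters. Everything else reduces to the coordinate computation already carried out in deriving the harmonic map equation itself.
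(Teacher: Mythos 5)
Your proposal is correct and follows essentially the same route as the paper: differentiate the coordinate form $\nabla_{\partial_\alpha}\bigl(g^{\alpha\beta}\sqrt{|\det g|}\,\partial_\beta\phi_t\bigr)=0$ with $\tilde\nabla_{\partial_t}$, pick up the curvature term from commuting the covariant derivatives, use torsion-freeness to replace $\tilde\nabla_{\partial_t}\partial_\beta\phi_t$ by $\nabla_{\partial_\beta}\delta\phi$, and identify the result with $d^\nabla * d^\nabla\delta\phi$. The only cosmetic difference is that the paper carries out the $t$-differentiation by pairing against a $\tilde\nabla$-parallel test field $X_t$ vanishing on the boundary and then invoking arbitrariness of $X$, whereas you differentiate the identically vanishing section directly; the substance is identical.
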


\begin{proof}
Let $X$ be a vector field on $\phi(M)$ that vanishes on $\phi(\partial M).$
Form a vector field $X_t$ on $\phi_t (M)$ by parallel transport of $X$ with respect to $\tilde\nabla.$
Then for fixed $t,$ $X_t$ vanished on $\phi_t(\partial M),$ and $\tilde\nabla_{\frac{d\phi}{dt}} X_t = 0.$
By the arguments in the previous proof $\left<X_t \wedge d^\nabla * d\phi_t\right> = 0$ almost everywhere.
Therefore the following vanishes
\begin{align*} & \frac{d}{dt} \left< X_t \wedge d^\nabla * d\phi_t \right>|_{t=0} \\ & = \left< X \wedge \tilde\nabla_{\frac{d\phi_t}{dt}} \tilde\nabla_{\partial_\alpha \phi_t} \left(g^{\alpha\beta} \partial_{\beta} \phi_t \sqrt{\det |g|} \right)|_{t=0}\right> dx^1\wedge\dots\wedge dx^m.\end{align*}
The previous holds for all $X,$ so
\begin{align*}
& \tilde\nabla_{\frac{d\phi_t}{dt}} \tilde\nabla_{\partial_\alpha \phi_t} \left(g^{\alpha\beta} \partial_{\beta} \phi_t \sqrt{\det |g|} \right)|_{t=0} dx^1\wedge\dots\wedge dx^m \\
& = \nabla_{\partial_\alpha \phi}\left(g^{\alpha\beta} \sqrt{\det |g|} \tilde\nabla_{\frac{d\phi_t}{dt}}\partial_{\beta} \phi_t|_{t=0}  \right) dx^1\wedge\dots\wedge dx^m \\ & \quad + R(\delta\phi,\partial_\alpha\phi) g^{\alpha\beta} \partial_{\beta} \phi \text{ dvol}_g \\
& = \nabla_{\partial_\alpha \phi}\left(g^{\alpha\beta}  \sqrt{\det |g|} \nabla_{\partial_{\beta} \phi}\delta\phi \right) dx^1\wedge\dots\wedge dx^m \\ & \quad - g^{\alpha\beta}R(\partial_\alpha\phi,\delta\phi) \partial_{\beta} \phi \text{ dvol}_g
\end{align*}
all vanish. Note that $\tilde\nabla_{\frac{d\phi_t}{dt}}$ ignores $g^{\alpha\beta}  \sqrt{\det |g|}$ since it is constant in t.
Expressing $d^\nabla * d^\nabla \delta\phi$ in coordinates on $M$ gives
\begin{align*}
d^\nabla * d^\nabla \delta\phi & = d^\nabla \nabla_{\partial_\beta\phi} * dx^\beta \\
& = \nabla_{\partial_\alpha \phi}\left(g^{\alpha\beta}  \sqrt{\det |g|} \nabla_{\partial_{\beta} \phi}\delta\phi \right) dx^1\wedge\dots\wedge dx^m.
\end{align*}
Therefore we get the result
$$d^\nabla * d^\nabla \delta\phi - g^{\alpha\beta} R(\partial_\alpha \phi, \delta\phi) \partial_\beta\phi \textnormal{ dvol}_g=0.$$
\end{proof}

We now recall how the functional $\mathcal{A}$ transforms under conformal transformations.
Consider a transformation of the metric $g \to e^a g$ where $a : M \to \mathbb{R}.$
Then $g^{\alpha\beta} \to e^{-a} g^{\alpha\beta}$ and $\sqrt{\det g} \to e^{\frac{ma}{2}} \sqrt{\det g},$ so
$$\mathcal{A}(\phi) = \int_M \frac{1}{2} g^{\alpha\beta}  \left< \partial_\alpha\phi, \partial_\beta\phi \right> \text{dvol}_g
\to e^{\frac{ma}{2} - a} \int_M \frac{1}{2} g^{\alpha\beta}  \left< \partial_\alpha\phi, \partial_\beta\phi \right> \text{dvol}_g.$$
In particular when $m=2$, $\mathcal{A}(\phi)$ is left unchanged by the transformation of the metric. Another way to express this is that the star operator is conformally invariant in the middle degree, which is one for a $2$-manifold,
and this is what is used in the definition of the energy functional. This implies the following

\begin{lemma}
When the domain $M$ is two dimensional, the energy functional  $\mathcal{A}$, and the harmonic map equation, are invariant with respect to locally conformal changes of the metric for $M$.
\end{lemma}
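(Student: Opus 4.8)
The plan is to leverage the conformal scaling computation already carried out just before the statement. First I would recall that under a local conformal change $g \mapsto e^{a}g$, with $a\colon M\to\mathbb{R}$ an arbitrary smooth function, the inverse metric scales as $g^{\alpha\beta}\mapsto e^{-a}g^{\alpha\beta}$ and the volume density as $\sqrt{\det g}\mapsto e^{ma/2}\sqrt{\det g}$, so that in the coordinate expression
$$\left<d\phi\wedge * d\phi\right> = g^{\alpha\beta}\left<\partial_\alpha\phi,\partial_\beta\phi\right>\,\text{dvol}_g$$
the integrand acquires the pointwise factor $e^{(m/2-1)a}$. Setting $m=2$ makes this factor identically $1$, so the integrand — and hence $\mathcal{A}(\phi)$ — is literally unchanged. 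Since $a$ is allowed to be any function, and the argument is purely pointwise, it applies to conformal changes supported in a coordinate chart, which is exactly what is meant by a \emph{locally} conformal change.

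For the harmonic map equation I would give two complementary arguments. The cleaner one is to observe that the connection $\nabla$ on $\phi^*TN$ is the pullback of the Levi-Civita connection of $(N,h)$ and so does not depend on $g$ at all; the only $g$-dependence in $d^\nabla * d\phi$ enters through the Hodge star. Acting on $p$-forms on an $m$-manifold, $*$ scales under $g\mapsto e^{a}g$ as $*\mapsto e^{(m/2-p)a}*$, and for one-forms ($p=1$) on a surface ($m=2$) this weight vanishes, so $*d\phi$ — and therefore $d^\nabla * d\phi$ — is unchanged; hence the equation $d^\nabla * d\phi = 0$ is manifestly conformally invariant. Alternatively, since harmonic maps were characterized above as the critical points of $\mathcal{A}$, invariance of $\mathcal{A}$ immediately forces its critical set to be conformally invariant.

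I do not expect a genuine obstacle: the content is entirely the bookkeeping of conformal weights, already displayed in the excerpt. The one point worth isolating rather than burying in the computation is the middle-degree phenomenon — that $p=m/2$ is precisely the degree in which $*$ carries no conformal weight — since this is the structural reason the two dimensional sigma model enjoys a classical conformal symmetry, and it is the fact one actually wants recorded.
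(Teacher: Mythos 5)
Your argument is correct and follows essentially the same route as the paper: the conformal-weight bookkeeping $g^{\alpha\beta}\mapsto e^{-a}g^{\alpha\beta}$, $\sqrt{\det g}\mapsto e^{ma/2}\sqrt{\det g}$ giving the factor $e^{(m/2-1)a}$, together with the observation that the Hodge star is conformally invariant in the middle degree, which is exactly what the paper records (and your pointwise treatment of the factor is in fact slightly more careful than the paper's display, which pulls $e^{ma/2-a}$ outside the integral). The only addition, the remark that invariance of $\mathcal{A}$ alone already forces invariance of its critical set, is a harmless strengthening of the same idea.
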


From now on, whenever we consider coordinates on a Riemann surface $\Sigma,$ we will consider characteristic coordinates $z=x+iy, \bar z = x - iy$ where $(x,y)$ are conformal coordinates.
We will also extend $<\cdot,\cdot>$ and $\nabla$ to the complexification of $\phi^*TN.$
We then have the following form of the equation for harmonic maps $\phi:\Sigma \to N$
$$\nabla_{\partial \phi} \bar\partial \phi + \nabla_{\bar\partial \phi} \partial \phi= 0,$$
and the generalization of the equation for Jacobi fields
$$\nabla_{\partial \phi}\nabla_{\bar\partial \phi} \delta\phi - R(\partial\phi, \delta\phi)\bar\partial\phi + \nabla_{\bar\partial \phi} \nabla_{\partial \phi} \delta\phi - R(\bar\partial\phi, \delta\phi) \partial\phi = 0$$
for a variation $\delta\phi$ of $\phi$  along the space of harmonic maps.

From [4] we also have the powerful result that a weak harmonic map on a Riemann surface $\Sigma$ is infinitely differentiable if $N$ is a $C^\infty$ Riemannian manifold.
In particular, any weak harmonic map satisfies the harmonic map equation.
If $N$ is only $C^k$ for some finite $k,$ then a harmonic map is as regular as the target $N.$

\section{The Functor Lemma}\label{functor}

In this section we will study the functor lemma mentioned in the introduction. In particular we fix a Riemannian target manifold $N$.

Given a compact Riemann surface $\Sigma$ with boundary $S$, our first objective is to understand the sense in which $Harm(\Sigma; N)$ is a Lagrangian submanifold of the space of Cauchy data on $S.$ To avoid technical issues, we will ultimately restrict to smooth functions on $\Sigma$ and $S$ (denoted $\Omega^0(\Sigma)$ and $\Omega^0(S)$, respectively), rather than the critical Sobolev classes  $W^1(\Sigma)$ and $W^{1/2}(S)$, respectively.

\begin{definition} Given a compact oriented $1$-manifold $S$, the space of Cauchy data is the subbundle
$$C_S:= \bigsqcup_{\phi\in \Omega^0(S;N)}\Omega^1(S;\phi^*TN) \subset T^*\Omega^0(S;N),$$
where given $\phi\in\Omega^0(S;N),$ and $\alpha \in \Omega^1(S;\phi^*TN)$
$$\alpha(X) = \int_S \left<X \wedge \alpha \right>.$$ for $X\in\Omega^0(S;\phi^*TN) = T\Omega^0(S;N) $
\end{definition}

At a heuristic level, the space of Cauchy data is essentially the same as the cotangent bundle of the configuration space $\Omega^0(S;N)$. However at a technical level there is a distinction, because when we take the dual of the infinite dimensional tangent space of $\Omega^0(S;N)$ at $\phi$, we introduce distributional type objects. We prefer to work with a more restrictive class of Cauchy data. As for the cotangent bundle, the action one form $\Theta$ on $C_S$  is given by
$$\Theta(v|_{\alpha_\phi}) = \alpha(\pi_*v|_{\alpha_\phi}) = \int_S  \left< \pi_*v|_{\alpha_\phi} \wedge \alpha_\phi \right>$$ for $v |_{\alpha_\phi} \in TC_S$, where $\pi:C_S \to \Omega^0(S;N)$ is the projection. In turn the symplectic structure on $C_S$ is given by $\omega = d\Theta$.

On $W^1(\Sigma;N)$ there is the trace map $tr : W^1(\Sigma;N) \to W^{1/2}(S;N)$ given by restriction.
However, we would like to keep track of Cauchy data (for a second order equation - the harmonic map equation), so we will introduce the enhanced trace map (and we will now restrict to smooth functions)
$$\textbf{r} : \Omega^0(\Sigma;N) \to C_S: \phi \to (*d\phi)|_S.$$ This essentially maps a field on $\Sigma$ to
its boundary values and its normal derivative along the boundary (which is a coordinate free version of its velocity
vector).

Following \cite{Cat1}, let $\delta$ be the exterior derivative on $\Omega^0(\Sigma; N).$ From the previous section (see
(\ref{intparts})),
where we now keep track of boundary terms, for $\phi:\Sigma\to N$ and $X=\delta\phi \in \Omega^0(\Sigma;\phi^*TN)$ (a tangent vector at $\phi$)
$$\delta\mathcal{A}(X|_{\phi}) = - \int_\Sigma \left< X|_\phi \wedge d^\nabla *d\phi \right> +\int_{S} \left< X|_{\phi} \wedge *d\phi|_S \right>.$$
When $\phi$ is a harmonic map, the integral over $\Sigma$ vanishes, and the integral over $S$ is just $\textbf{r}^*\Theta.$
Therefore,
$$\delta\mathcal{A}|_{Harm(\Sigma;N)} = \textbf{r}^* \alpha|_{Harm(\Sigma;N)}.$$
Upon applying $\delta$ to both sides of the equality, we get
$$\delta\textbf{r}^* \alpha|_{Harm(\Sigma;N)} = \omega|_{Harm(\Sigma; N)}=0,$$
since exterior differentiation commutes with pull back.
Therefore, at all smooth points, $\mathbf{r}(Harm(\Sigma; N))$ is an isotropic submanifold of the symplectic manifold $C_S.$

\begin{lemma} At all smooth points, $\mathbf{r}(Harm(\Sigma; N))$ is a Lagrangian submanifold of $C_S$.
\end{lemma}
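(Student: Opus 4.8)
The plan is to upgrade the isotropy statement already established to maximality by a dimension (or, in the infinite-dimensional setting, a ``half-dimensionality'') count. We have already shown that $\textbf{r}(Harm(\Sigma;N))$ is isotropic in $C_S$, so it remains to verify that its tangent space at a smooth harmonic map $\phi$ is not merely contained in, but equal to, its symplectic orthogonal complement in $T_{\textbf{r}(\phi)}C_S$. The natural way to do this is to identify the tangent space to $Harm(\Sigma;N)$ at $\phi$ with the space of solutions $\delta\phi$ of the linearized (Jacobi-type) equation from Section 3, and then to analyze the image of this space under $\delta\textbf{r}$, the linearization of the enhanced trace map.

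First I would describe $T_{\textbf{r}(\phi)}C_S$ concretely: over a point $\psi\in\Omega^0(S;N)$ it is $\Omega^0(S;\psi^*TN)\oplus\Omega^1(S;\psi^*TN)$, i.e. a ``boundary value'' component and a ``boundary momentum'' component, and the symplectic form pairs these two factors (up to sign) by the $L^2$ pairing $\int_S\langle\,\cdot\wedge\,\cdot\,\rangle$ together with a curvature-free correction coming from the nontriviality of the bundle. The linearized trace sends a Jacobi field $\delta\phi$ on $\Sigma$ to the pair $(\delta\phi|_S, \,\delta(*d\phi)|_S)$ — its restriction to the boundary and the induced variation of the conormal derivative. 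The key step is then a Cauchy-problem argument for the linearized harmonic map equation: since the Jacobi equation is a linear second-order elliptic (in the conformal/characteristic coordinates, essentially a Laplace-type) system on $\Sigma$, prescribing Cauchy data $(f,g)\in\Omega^0(S;\phi^*TN)\oplus\Omega^1(S;\phi^*TN)$ along $S$ determines a formal solution uniquely, so $\delta\textbf{r}$ is (formally) a bijection from the space of Jacobi fields onto an isotropic subspace of $C_S$ whose ``size'' forces it to be exactly half of $T_{\textbf{r}(\phi)}C_S$. Concretely, one shows that any $w\in T_{\textbf{r}(\phi)}C_S$ symplectically orthogonal to all of $\delta\textbf{r}(T_\phi Harm(\Sigma;N))$ must itself be of the form $\delta\textbf{r}(\delta\phi)$ for some Jacobi field, using Green's identity / integration by parts (the linearized version of (\ref{intparts})) to convert the orthogonality condition into the statement that the putative obstruction is Cauchy data of a genuine solution.

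The cleanest route to that last point is to run the same variational argument one more level up: the graph of $\delta\mathcal{A}$ restricted to $\Omega^0(\Sigma;N)$, viewed as a (Lagrangian) submanifold of $T^*\Omega^0(\Sigma;N)$ via the generating-function formalism, pushes forward under $\textbf{r}$ to $\textbf{r}(Harm(\Sigma;N))$; since $\mathcal{A}$ is (locally) a generating function and $\textbf{r}$ is a submersion onto its image with the right kernel, general symplectic-reduction / composition-of-canonical-relations reasoning (exactly as in Section 2) gives Lagrangianness of the image, not just isotropy. Equivalently: the Cauchy data map $\textbf{r}$ realizes $Harm(\Sigma;N)$ as $L_{\mathrm{inclusion}}\circ(\text{graph }d\mathcal{A})$ in the infinite-dimensional $\textbf{Symp}$, and a composition of canonical relations that is transverse is again a canonical relation.

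The hard part will be the analysis underlying the Cauchy-problem step — namely controlling the function spaces. The linearized harmonic map operator is elliptic, so its characteristic variety is empty and $S$ is automatically non-characteristic; hence the Cauchy data pair $(\delta\phi|_S,\delta(*d\phi)|_S)$ is ``half'' the full jet along $S$ and, for a boundary of a compact surface, formal power-series (Cauchy–Kovalevskaya-type) solvability plus the a priori isotropy pins down the image. But making ``half of the infinite-dimensional symplectic vector space $T_{\textbf{r}(\phi)}C_S$'' precise requires either working in the analytic category, or passing to the appropriate Sobolev completion and invoking elliptic boundary regularity so that the trace map $\textbf{r}$ has closed range with the expected (Lagrangian) splitting; we will, as flagged in the text, suppress these analytic subtleties and argue at the formal level, where the kernel/cokernel bookkeeping for $\delta\textbf{r}$ is exactly the finite-dimensional computation of Section 2 carried over verbatim. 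I would therefore present the proof as: (i) recall that isotropy is already established; (ii) identify $T_\phi Harm$ with Jacobi fields and compute $\delta\textbf{r}$ on them; (iii) show, via the linearized Green's identity, that the symplectic-orthogonal of the image consists again of linearized Cauchy data, hence lies in the image; (iv) conclude Lagrangianness, noting the formal/analytic caveat.
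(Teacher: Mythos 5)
There is a genuine gap, and it sits in your key step. You assert that, because the Jacobi operator is elliptic and $S$ is non-characteristic, prescribing full Cauchy data $(f,g)$ along $S$ determines a (formal) Jacobi field uniquely, so that $\delta\mathbf{r}$ is a bijection from Jacobi fields onto a subspace of ``half the size'' of $T_{\mathbf{r}(\phi)}C_S$. For an elliptic system the Cauchy problem is exactly the wrong well-posed problem: it is ill-posed in Hadamard's sense, Cauchy--Kovalevskaya produces only local solutions for analytic data and says nothing about global Jacobi fields on the compact surface $\Sigma$, and if arbitrary pairs $(f,g)$ really were attained, the image of $\delta\mathbf{r}$ would be all of $T_{\mathbf{r}(\phi)}C_S$, contradicting the isotropy you have already established. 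The image of $\delta\mathbf{r}$ is not ``half by size''; it is the graph of the linearized Dirichlet-to-Neumann map, and what makes it maximal isotropic is solvability of the \emph{Dirichlet} problem for the Jacobi equation, i.e. surjectivity of the composed map $T_\phi Harm(\Sigma;N)\to \Omega^0(S;\phi^*TN)$, $\delta\phi\mapsto \delta\phi|_S$. That is precisely the ingredient the paper isolates: an isotropic image in (a subbundle of) $T^*\Omega^0(S;N)$ whose projection to the base is surjective on tangent spaces is automatically Lagrangian, because any $w$ in the symplectic orthogonal can be corrected by an element of the image to become vertical, and a vertical covector annihilating all boundary values must vanish.

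Your step (iii) can be repaired along exactly these lines: given $w=(f,g)$ symplectically orthogonal to the image, solve the linear Dirichlet problem for a Jacobi field $u$ with $u|_S=f$, subtract $\delta\mathbf{r}(u)$, and pair the vertical remainder against $\delta\mathbf{r}(\delta\phi)$ with $\delta\phi|_S$ arbitrary (again Dirichlet solvability) to force $g$ to equal the Neumann data of $u$; this is the linearized Green's identity argument you gesture at, but its engine is Dirichlet, not Cauchy, solvability. The same objection applies to your alternative ``generating function / composition of canonical relations'' route: the transversality and ``right kernel'' hypotheses invoked there are exactly the surjectivity of the linearized boundary-value map, so that route does not avoid the issue, it conceals it.
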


\begin{proof} We have already established that $\mathbf{r}(Harm(\Sigma; N))$ is isotropic. To prove maximality consider the diagram
$$ \begin{matrix} Harm(\Sigma;N) &\stackrel{\mathbf{r}}{\rightarrow}& C_S\\
& \searrow & \downarrow \pi\\ & & \Omega^0(S;N) \end{matrix} $$
Given $\phi\in Harm(\Sigma;N)$, there is a corresponding diagram of tangent spaces and derivatives. Because
the tangent space for $Harm(\Sigma;N)$ is defined by the Jacobi equation, an elliptic differential equation,
the derivative of the composed map $Harm(\Sigma;N)\to \Omega^0(S;N)$ is surjective, i.e. we can always solve the
linear Dirichlet problem. This implies that at a smooth point, the tangent space to
$\mathbf{r}(Harm(\Sigma; N))$ is maximal isotropic in $C_S$ [In general, given a map $X\to T^*C$ with isotropic image,
if the composed map $X\to C$ has surjective derivative, then the image of $X$ is Lagrangian at smooth points].
\end{proof}

To make sense of the functor lemma, we also need to introduce Segal's category of Riemann surfaces which we will denote as $\textbf{Riem}.$

\begin{definition}(Segal's Category of Riemann Surfaces)
The objects in $\textbf{Riem}$ are compact oriented $1$-manifolds, and a morphism $S^-\to S^+$ is a compact Riemann surface $\Sigma,$ (with its standard orientation) such that (the boundary) $\partial\Sigma = S^+-S^-$, where this notation means that $S^+$ is the portion of the boundary where the intrinsic orientation and the induced orientations agree. Given compact Riemann surfaces $\Sigma_1$ and $\Sigma_2$ with $\partial\Sigma_1 = S_1^+ -S_1^-$ and $\partial\Sigma_2 = S_2^+- S_2^-$, where $S_1^+=S_2^-$, the composition $\Sigma_2\circ\Sigma_1$ is given by gluing the Riemann surfaces along the common boundary $S_1^+=S_2^-$.
\end{definition}

Suppose that $\Sigma:S^- \to S^+$ is a morphism in the category $\textbf{Riem}.$
The reversal of the orientation of $S_-$ changes the sign of the symplectic form on the restriction of $C_S$ to $S^-,$ so $C_{S^-}^- \times C_{S^+} = C_S$ as infinite dimensional symplectic manifolds.
Therefore, $Harm(\Sigma;N)$ is a canonical relation from $C_{S^-}$ to $C_{S^+}.$
The identification $\Sigma \to Harm(\Sigma;N)$ is the functor from $\textbf{Riem}$ to $\textbf{Symp}.$

It is easy to see that the above identification is in fact functorial as follows.
Let $\Sigma_1$ and $\Sigma_2$ be two elements of $\textbf{Riem}$ such that $S_1^+$ is identified with $S_2^-.$
Then  $Harm(\Sigma_2;N) \circ Harm(\Sigma_1;N)$ is the subset of $C_{S_1^-}^- \times C_{S_2^+}$ consisting of elements of the form $(*\phi_1|_{S_1^-},*\phi_2|_{S_2^+}),$  where $\phi_1:\Sigma_1 \to N$ and $\phi_2:\Sigma_2 \to N$ are harmonic maps such that $*d\phi_1|_{S_1^+} = *d\phi_2|_{S_2^-}.$
Suppose $\phi$ is a harmonic map on $\Sigma,$ then $\phi_1 = \phi|_{\Sigma_1}$ and $\phi_2 = \phi|_{\Sigma_2}$ are harmonic maps on $\Sigma_1$ and $\Sigma_2$ respectively such that $*d\phi_1|_{S_1^+} = *d\phi_2|_{S_2^-},$ so $\mathcal{H}(\Sigma;N) \subset Harm(\Sigma_2;N) \circ Harm(\Sigma_1;N).$
Now suppose $\phi_1:\Sigma_1 \to N$ and $\phi_2:\Sigma_2 \to N$ are harmonic maps such that $*d\phi_1|_{S_1^+} = *d\phi_2|_{S_2^-}.$
Define a map $\phi : \Sigma \to N$ by $\phi|_{\Sigma_1} = \phi_1$ and $\phi|_{\Sigma_2} = \phi_2.$ Since the values
and normal derivatives match up along the boundary, $\phi$ is a harmonic map on $\Sigma.$

This completes the proof of the Functor Lemma \ref{sewing}.

\end{document}